\numberwithin{equation}{section}
\newtheorem{proposition}{Proposition}[section]
\newtheorem{theorem}[proposition]{Theorem}
\newtheorem{corollary}[proposition]{Corollary}
\theoremstyle{definition}
\newtheorem{remark}[proposition]{Remark}
\title{Behaviour of the Stokes operators under domain perturbation
\footnote{
{\it 2010 Mathematics Subject Classification} Primary: 35J15, Secondary: 35Q30}
\footnote{
{\it Keywords and phrases}: domain perturbation, Stokes operator, Dirichlet boundary conditions}}
\author{Sylvie Monniaux\,
\thanks{Aix Marseille Universit\'e, CNRS, Centrale Marseille, I2M UMR 7373, 13453, 
Marseille, France - {\tt sylvie.monniaux@univ-amu.fr}}
}
\date{ }
\begin{document}

\maketitle

\begin{abstract}  
Depending of the geometry of the domain, one can define --at least-- three
different Stokes operators with Dirichlet boundary conditions. We describe how
the resolvents of these Stokes operators converge with respect to a converging
sequence of domains.
\end{abstract}

%\tableofcontents

\section{Introduction}
\label{sec:intro}

Let $\Omega$ denote an open connected subset of ${\mathds{R}}^d$. We do not
impose any regularity of the boundary $\partial\Omega$ of the domain $\Omega$ and 
possibly $\Omega$ is unbounded. To avoid too many cases, we will however assume
that the $d$-dimensional Hausdorff measure of $\partial\Omega$ is zero.

We denote by ${\mathcal{D}}={\mathscr{C}}_c^\infty(\Omega,{\mathds{R}}^d)$ the 
space of smooth vector fields with compact support in $\Omega$. Let ${\mathcal{D}}'$
denote its dual, the space of (vector valued) distributions on $\Omega$.

\subsection*{Acknowledgements}
The author aknowledges the partial support by the {\sc anr} project 
{\sc infamie anr-15-ce40-001}. The understanding of this subject has 
benefited from discussions with A.F.M. ter~Elst. The author would also like
to thank the anonymous referee whose remarks greatly improved this manuscript.

\section{Setting}
\label{sec:setting}

\subsection{The Leray orthogonal decomposition of $L^2$}
\label{subsec:Leraydec}

We start with a very important and profound result due to de Rham 
\cite[Chapter IV \S22, Theorem 17']{dR84};
see also \cite[Chapter I \S1.4, Proposition~1.1]{Tem79}.

\begin{theorem}[de Rham]
\label{thm:deRham}
Let $T\in{\mathcal{D}}'$ be a distribution. Then the following two properties are 
equivalent.
\begin{enumerate}[(i)]
\item
${}_{{\mathcal{D}}'}\langle T,\varphi\rangle_{{\mathcal{D}}}=T(\varphi)=0$ for all 
$\varphi\in{\mathcal{D}}$ with ${\rm div}\,\varphi=0$.
\item
There exists a scalar distribution $S\in{\mathscr{C}}_c^\infty(\Omega)'$ such that
$T=\nabla S$ in ${\mathcal{D}}'$.
\end{enumerate}
\end{theorem}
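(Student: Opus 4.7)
The direction (ii) $\Rightarrow$ (i) is immediate: if $T = \nabla S$, then for any $\varphi \in {\mathcal{D}}$ with ${\rm div}\,\varphi = 0$, one has $T(\varphi) = \langle \nabla S,\varphi\rangle = -\langle S,{\rm div}\,\varphi\rangle = 0$.

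For the substantive implication (i) $\Rightarrow$ (ii), the plan is to construct $S$ locally on a family of open balls exhausting $\Omega$ and then to glue the local pieces together. The key local ingredient is the Bogovskii lemma: for any open ball $B$ whose closure is contained in $\Omega$, the divergence operator maps ${\mathscr{C}}_c^\infty(B,{\mathds{R}}^d)$ onto the subspace of ${\mathscr{C}}_c^\infty(B)$ consisting of functions with vanishing integral, and admits a continuous right inverse. This is established either by an explicit integral kernel formula or by combining the Ne\v{c}as inequality with the open mapping theorem.

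Granting this local surjectivity, fix $f_0\in{\mathscr{C}}_c^\infty(B)$ with $\int_B f_0 = 1$ and, for each $f \in{\mathscr{C}}_c^\infty(B)$, choose $v_f \in {\mathscr{C}}_c^\infty(B,{\mathds{R}}^d)$ with ${\rm div}\,v_f = f - \bigl(\int_B f\bigr)f_0$. Set $S_B(f) := -T(v_f) + \bigl(\int_B f\bigr)c_B$ for an arbitrary constant $c_B$. The value is independent of the choice of $v_f$: two admissible choices differ by a divergence-free element of ${\mathscr{C}}_c^\infty(B,{\mathds{R}}^d)$, on which $T$ vanishes by hypothesis~(i). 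Continuity in $f$ is inherited from the continuity of the Bogovskii right inverse. Finally, for any $\varphi\in{\mathscr{C}}_c^\infty(B,{\mathds{R}}^d)$, the field $v_{{\rm div}\,\varphi}-\varphi$ is divergence-free with compact support in $B$, hence annihilated by $T$; this yields $\langle \nabla S_B,\varphi\rangle = -S_B({\rm div}\,\varphi) = T(v_{{\rm div}\,\varphi}) = T(\varphi)$, so $\nabla S_B = T$ on $B$.

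To globalize, cover $\Omega$ by such balls $(B_i)_{i\in I}$ and denote by $S_i$ the local distribution constructed above on $B_i$, defined up to an additive constant $c_i$. On each connected component of an overlap $B_i\cap B_j$ one has $\nabla(S_i - S_j) = 0$, so $S_i - S_j$ is locally constant there. Using the connectedness of $\Omega$ and a spanning-tree argument on the nerve of the cover, the constants $c_i$ can be chosen iteratively so that the adjusted pieces $S_i + c_i$ agree on all overlaps and assemble into a single scalar distribution $S$ on $\Omega$ with $\nabla S = T$. The main obstacle is the Bogovskii surjectivity statement with \emph{compactly supported} solutions; once this local result is in hand, the patching is a standard argument of de Rham type on a connected open set.
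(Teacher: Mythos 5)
The paper gives no proof of this theorem; it is quoted as a classical result with references to de Rham and to Temam, so there is nothing internal to compare with and your attempt must be judged on its own. The direction (ii)$\Rightarrow$(i) and the local construction on a ball $B$ are sound: well-definedness of $S_B$ uses hypothesis (i), the identity $\nabla S_B=T$ on $B$ uses $\int_B{\rm div}\,\varphi=0$ together with (i), and continuity of $S_B$ does follow from the Sobolev bounds and support control of the Bogovskii right inverse, so $S_B$ is a genuine distribution on $B$ with $\nabla S_B=T|_B$.

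The genuine gap is in the globalization. Fixing the constants along a spanning tree of the nerve makes the pieces agree along tree edges only; agreement along the remaining edges is precisely the vanishing of the \v{C}ech class of the cocycle $c_{ij}=S_i-S_j$ in $H^1$ of the nerve, i.e.\ in $H^1(\Omega,{\mathds{R}})$ for a good cover, and connectedness of $\Omega$ alone does not give this. Note that your patching step never reuses hypothesis (i); if it were valid as written it would apply verbatim to any curl-free field, for instance to $T=(-x_2,x_1)/|x|^2$ on an annulus in ${\mathds{R}}^2$, whose local primitives (branches of the angle) have locally constant differences on overlaps yet admit no global primitive. What actually saves the theorem is that (i) is strictly stronger than curl-freeness: around any cycle of balls in the cover one can construct a divergence-free field in ${\mathcal{D}}$ supported in the union of those balls (a solenoidal ``loop'' field, e.g.\ $\nabla^\perp\chi$ in dimension $2$) whose pairing with $T$ computes the period $\sum_k c_{i_ki_{k+1}}$, so (i) forces every period to vanish and the cocycle is a coboundary. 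You must either add this period-killing argument, or restructure the patching: run the same Bogovskii construction on an increasing exhaustion of $\Omega$ by connected, relatively compact open sets that are finite unions of balls (surjectivity of ${\rm div}$ onto mean-zero test functions on such sets follows from the ball case by a standard chaining argument), so that consecutive primitives differ by honest constants on connected overlaps and the patching becomes trivial.
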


De Rham's theorem has the following corollary.

\begin{corollary}
\label{cor:deRham}
Let $T\in H^{-1}(\Omega,{\mathds{R}}^d)$. Then the following are equivalent.
\begin{enumerate}[{\rm (i)}]
\item
\label{i}
${}_{H^{-1}(\Omega,{\mathds{R}}^d)}\langle T,
\varphi\rangle_{H^1_0(\Omega,{\mathds{R}}^d)}=0$ 
for all $\varphi\in{\mathcal{D}}$ with ${\rm div}\,\varphi=0$.
\item
\label{ii}
There exists a scalar distribution $\pi\in L^2_{\rm loc}(\Omega)$ such that 
$T=\nabla \pi$ in ${\mathcal{D}}'$.
\end{enumerate}
\end{corollary}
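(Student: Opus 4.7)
\medskip

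\noindent\textbf{Proof sketch.} The implication (\ref{ii})$\Rightarrow$(\ref{i}) is immediate: if $T=\nabla\pi$ with $\pi\in L^2_{\mathrm{loc}}(\Omega)$, then for any $\varphi\in\mathcal{D}$ with $\mathrm{div}\,\varphi=0$, supported in a compact $K\subset\Omega$, we have
$$\langle T,\varphi\rangle = \langle\nabla\pi,\varphi\rangle = -\int_K \pi\,\mathrm{div}\,\varphi\,dx = 0,$$
which is legitimate since $\pi\in L^2(K)$.

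For the nontrivial direction (\ref{i})$\Rightarrow$(\ref{ii}), the plan is to apply Theorem~\ref{thm:deRham} first and then upgrade the regularity of the resulting potential using Ne\v{c}as' lemma. Since $T\in H^{-1}(\Omega,\mathbb{R}^d)\subset\mathcal{D}'$ vanishes on all divergence-free $\varphi\in\mathcal{D}$, de Rham's theorem furnishes a scalar distribution $S\in\mathscr{C}_c^\infty(\Omega)'$ with $T=\nabla S$ in $\mathcal{D}'$. I would then set $\pi:=S$ and show $\pi\in L^2_{\mathrm{loc}}(\Omega)$.

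To obtain the local $L^2$-regularity, I would exhaust $\Omega$ by a sequence of bounded open Lipschitz subsets $U_n\subset\subset U_{n+1}\subset\Omega$ with $\bigcup_n U_n=\Omega$ (possible since $\Omega$ is open and connected). On each $U_n$, the restriction of $T$ to $\mathcal{D}(U_n)$ extends to an element of $H^{-1}(U_n,\mathbb{R}^d)$ (by extending test fields from $U_n$ to $\Omega$ by zero, which is an isometric embedding $H^1_0(U_n)\hookrightarrow H^1_0(\Omega)$). Hence $\nabla S|_{U_n}\in H^{-1}(U_n,\mathbb{R}^d)$. Ne\v{c}as' lemma (the quantitative ``negative norm'' estimate on bounded Lipschitz domains) then yields $S|_{U_n}-c_n\in L^2(U_n)$ for some constant $c_n\in\mathbb{R}$, with
$$\|S|_{U_n}-c_n\|_{L^2(U_n)}\le C_n\,\|\nabla S\|_{H^{-1}(U_n,\mathbb{R}^d)}.$$
Since two such constants corresponding to nested $U_n\subset U_{n+1}$ must agree (as $S|_{U_n}$ is a genuine function up to an additive constant and the two representatives coincide a.e.\ on $U_n$), we may choose the $c_n$ compatibly and absorb them into a single representative. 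This yields $\pi=S\in L^2_{\mathrm{loc}}(\Omega)$ with $T=\nabla\pi$.

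The main obstacle is the local regularity step: the hypothesis on $\Omega$ is very weak (no boundary regularity is assumed), so one cannot invoke Ne\v{c}as' lemma globally. The exhaustion by Lipschitz subdomains circumvents this, but one must carefully reconcile the additive constants $c_n$ across the nested pieces in order to produce a single element of $L^2_{\mathrm{loc}}(\Omega)$ rather than a coherent family of local representatives.
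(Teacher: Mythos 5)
Your proof follows essentially the same route as the paper: apply de Rham's theorem to obtain a potential $S$ with $T=\nabla S$, then use the fact that a distribution whose gradient lies in $H^{-1}$ is locally $L^2$ --- the paper simply cites this regularity step (\cite[Proposition~1.2]{Tem79}, \cite[Lemma~2.2.1]{So01}) where you sketch its proof via Ne\v{c}as' lemma on an exhaustion by Lipschitz subdomains. The only remark worth making is that your concern about reconciling the constants $c_n$ is moot: $S$ is a single fixed distribution on $\Omega$, so the $L^2(U_n)$ representatives of its restrictions automatically coincide a.e.\ on overlaps, as you yourself note.
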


\begin{proof}
We only have to show \eqref{i}$\implies$\eqref{ii}. By Theorem~\ref{thm:deRham} 
there exists $S\in{\mathscr{C}}_c^\infty(\Omega)'$ such that $T=\nabla S$. Then 
$\nabla S\in H^{-1}(\Omega,{\mathds{R}}^d)$.
Consequently, $S\in L^2_{\rm loc}(\Omega)$ by \cite[Proposition~1.2]{Tem79} 
(for a direct proof, see also \cite[Lemma~2.2.1]{So01}.
\end{proof}

Denote by ${\mathcal{H}}=L^2(\Omega,{\mathds{R}}^d)$ the square integrable 
vector fields on $\Omega$. 
We endow the vector-valued space ${\mathcal{H}}$ with the scalar product
\[
\langle u,v\rangle_{\mathcal{H}}:=\int_\Omega u\cdot v=
\sum_{j=1}^d\int_\Omega u_jv_j, \quad u,v \in {\mathcal{H}}.
\]
Then ${\mathcal{H}}$ is a Hilbert space. We define the subspace ${\mathscr{G}}$ of 
${\mathcal{H}}$ consisting of gradients by
\begin{equation}
\label{def:G}
{\mathscr{G}}:=\bigl\{\nabla\pi; \pi\in L^2_{\rm loc}(\Omega),
 \nabla\pi\in{\mathcal{H}}\bigr\}.
\end{equation}
As a consequence of Corollary~\ref{cor:deRham}, ${\mathscr{G}}$ is a closed 
subspace of ${\mathcal{H}}$. We denote by ${\mathscr{H}}$ the orthogonal 
subspace of ${\mathscr{G}}$ in ${\mathcal{H}}$, that is
\begin{equation}
\label{def:H}
{\mathscr{H}}=\bigl\{u\in {\mathcal{H}};\langle u,g\rangle_{\mathcal{H}}=0\mbox{ for all }
g\in{\mathscr{G}}\bigr\}.
\end{equation}
Obviously, ${\mathscr{H}}$ is a Hilbert space and one has the orthogonal 
decomposition
\begin{equation}
\label{def:decompH}
{\mathcal{H}}={\mathscr{H}}\stackrel{\bot}{\oplus}{\mathscr{G}}.
\end{equation}
The orthogonal projection from ${\mathcal{H}}$ to ${\mathscr{H}}$ denoted by 
${\mathbb{P}}$ is called the {\tt Leray projection}. It is the adjoint of the canonical 
embedding $J:{\mathscr{H}}\hookrightarrow{\mathcal{H}}$; it verifies 
${\mathbb{P}}Ju=u$ for all $u\in{\mathscr{H}}$.
Next, define the subspace
\begin{equation}
\label{def:D}
{\mathscr{D}}=\bigl\{u\in{\mathcal{D}} ; {\rm div}\,u=0 \mbox{ in }\Omega\bigr\}.
\end{equation}
Then ${\mathscr{D}}\subset {\mathscr{H}}$ and by De Rham's theorem, 
${\mathscr{D}}^\bot={\mathscr{G}}$, so that ${\mathscr{D}}$ is dense in ${\mathscr{H}}$ 
with respect to the $L^2$-norm of ${\mathcal{H}}$.

The canonical embedding $J_0:{\mathscr{D}}\hookrightarrow{\mathcal{D}}$ is the 
restriction of $J$ to ${\mathscr{D}}$. Its adjoint 
$J_0'={\mathbb{P}}_1:{\mathcal{D}}'\to {\mathscr{D}}'$ is therefore an extension of
the Leray projection ${\mathbb{P}}$. A reformulation of de Rham's theorem 
(Thm~\ref{thm:deRham}) is
\[
\ker {\mathbb{P}}_1=\bigl\{T\in{\mathcal{D}}'; {\mathbb{P}}_1T=0\bigr\}
=\bigl\{\nabla S; S\in{\mathscr{C}}_c^\infty(\Omega)'\bigr\}.
\]

\subsection{Another orthogonal decomposition of $L^2$}
\label{subsec:anotherdec}

Since we made the assumption that the $d$-dimensional Hausdorff measure 
of $\partial\Omega$ is zero, we can
identify ${\mathcal{H}}=L^2(\Omega,{\mathds{R}}^d)$ with 
$\bigl\{U_{|_\Omega}; U\in L^2({\mathds{R}}^d,{\mathds{R}}^d), U=0 \mbox{ a.e. in }
{}^c\overline{\Omega}\bigr\}$ 
and define the space ${\mathscr{E}}$ to be the closure in $L^2(\Omega,{\mathds{R}}^d)$ of 
\begin{equation}
\label{def:W}
{\mathscr{W}}:=\bigl\{U_{|_{\Omega}} ; U\in H^1({\mathds{R}}^d,{\mathds{R}}^d), 
U=0\mbox{ a.e. in }{}^c\overline{\Omega} \mbox{ and }{\rm div}\,U=0\mbox{ in }
{\mathds{R}}^d\bigr\}.
\end{equation}
The space ${\mathscr{E}}$ is closed in ${\mathcal{H}}$ by definition and contains ${\mathscr{D}}$, 
and then ${\mathscr{H}}$.
The following decomposition of ${\mathcal{H}}$ holds
\begin{equation}
\label{def:anotherdecompH}
{\mathcal{H}}={\mathscr{E}}\stackrel{\bot}{\oplus}{\mathscr{F}},
\end{equation}
where ${\mathscr{F}}={\mathscr{E}}^\bot$.
Since ${\mathscr{H}}\subset{\mathscr{E}}$, it is obvious that ${\mathscr{F}}
\subset{\mathscr{G}}$.
It is also obvious that $\bigl\{\nabla q_{|_\Omega} ; q\in\dot H^1({\mathds{R}}^d)\bigr\} 
\subset {\mathscr{F}}$: let $u=U_{|_\Omega}\in {\mathscr{W}}$ and 
$q\in\dot H^1({\mathds{R}}^d)$; then
\[
\langle u,\nabla q\rangle_{\mathcal{H}}
=\langle U,\nabla q\rangle_{L^2({\mathds{R}}^d,{\mathds{R}}^d)}=0.
\]
For further use, we will denote by $L:{\mathscr{E}}\hookrightarrow{\mathcal{H}}$
the canonical embedding; its adjoint $L'={\mathbb{Q}}:{\mathcal{H}}\to{\mathscr{E}}$
is the orthogonal projection from ${\mathcal{H}}$ to ${\mathscr{E}}$. The operators
$L$ and ${\mathbb{Q}}$ verify ${\mathbb{Q}} Lu=u$ for all $u\in{\mathscr{E}}$, 
as do $J$ and ${\mathbb{P}}$ in the above setting.

\begin{remark}
When $\Omega\subset {\mathds{R}}^d$ is bounded and smooth enough, 
say with Lipschitz boundary, the spaces ${\mathscr{H}}$ and ${\mathscr{E}}$ 
coincide: they are equal to
\[
{\mathbb{L}}^2_\sigma(\Omega):=
\bigl\{u\in L^2(\Omega,{\mathds{R}}^d); {\rm div}\,u=0\mbox{ in }\Omega\mbox{ and }
\nu\cdot u=0\mbox{ on }\partial\Omega\bigr\},
\]
where ${\rm div}\,u$ is to be taken in the sense of distributions and $\nu(x)$ denotes 
the exterior normal unit vector at $x\in\partial\Omega$, defined
for almost every $x$ in the case of a Lipschitz boundary $\partial\Omega$. Here,
$\nu\cdot u\in H^{-1/2}(\partial\Omega)$ is defined via the integration by parts formula
\[
{}_{H^{-1/2}}\langle \nu\cdot u,\varphi\rangle_{H^{1/2}}=\int_\Omega u\cdot \nabla\Phi
+\int_\Omega {\rm div}\,u\cdot \Phi
\] 
for all $\varphi\in H^{1/2}(\partial\Omega)$ and $\Phi\in H^1(\Omega)$ satisfying
${\rm Tr}_{|_{\partial\Omega}}\Phi=\varphi$.
\end{remark}

The fact that ${\mathscr{H}}={\mathbb{L}}^2_\sigma(\Omega)$ in the case of a 
bounded domain with Lipschitz boundary was proved in \cite[Thm~1.4]{Tem79}.
If $\Omega\subset{\mathds{R}}^d$ has a continuous boundary as in 
\cite[Prop.~2.2]{AD08} (see also \cite{St75}), 
${\mathscr{W}}=\{u\in H^1_0(\Omega,{\mathds{R}}^d); {\rm div}\,u=0\}$.
According to \cite[Thm~1.6]{Tem79}, this latter space is the closure of
${\mathscr{D}}$ in $H^1(\Omega,{\mathds{R}}^d)$ if the boundary of $\Omega$ 
is Lipschitz, so that ${\mathscr{E}}={\mathbb{L}}^2_\sigma(\Omega)={\mathscr{H}}$.

\section{Spaces of divergence-free vector fields}
\label{sec:div-freeVF}

In this section, we introduce several spaces which yield different suitable definitions
of the Stokes operator with Dirichlet boundary conditions.

We start with 
\[
{\mathcal{V}}=H^1_0(\Omega,{\mathds{R}}^d).  
\]
Then ${\mathcal{V}}$ is the closure of ${\mathcal{D}}$ in 
$H^1(\Omega,{\mathds{R}}^d)$. We provide ${\mathcal{V}}$ with the norm 
induced from $H^1(\Omega,{\mathds{R}}^d)$.
Next, we define the space
\[
{\mathcal{W}}:=\bigl\{U_{|_{\Omega}} : U\in H^1({\mathds{R}}^d,{\mathds{R}}^d)
\text{ and }U=0\text{ a.e.\ in }\overline{\Omega}^c\bigr\}.  
\]
Then ${\mathcal{W}}$ is a closed subspace of $H^1(\Omega,{\mathds{R}}^d)$
and we provide ${\mathcal{W}}$ with the norm induced from 
$H^1(\Omega,{\mathds{R}}^d)$.

It is clear that
${\mathcal{D}} \subset {\mathcal{V}}\subseteq {\mathcal{W}} \subset {\mathcal{H}}$.
If $\Omega$ has a continuous boundary, then ${\mathcal{V}}={\mathcal{W}}$ (see 
\cite[pages 24-26]{St75}), but in general ${\mathcal{V}} \neq {\mathcal{W}}$, 
as shown in \cite[Section~7]{AD08}).
Identifying ${\mathcal{H}}$ with its dual, we obtain the Gelfand triples
${\mathcal{V}} \hookrightarrow {\mathcal{H}} \hookrightarrow {\mathcal{V}}'$ and 
${\mathcal{W}} \hookrightarrow {\mathcal{H}} \hookrightarrow {\mathcal{W}}'$.

Let ${\mathscr{V}}$ be the closure of ${\mathscr{D}}$
in ${\mathcal{V}}=H^1_0(\Omega,{\mathds{R}}^d)$ and 
let ${\mathscr{X}}:={\mathcal{V}}\cap{\mathscr{H}}$.
It is straightforward that 
${\mathscr{V}}\subseteq {\mathscr{X}}\subseteq {\mathscr{W}}$.
If $\Omega$ is bounded with Lipschitz boundary, then
${\mathscr{V}} = {\mathscr{X}}= {\mathscr{W}}$
(see \cite[Section~3]{Hey76} and \cite[Theorem~2.2]{LaSo76}), but not in general. 
The famous example for which ${\mathscr{V}}$ is different from 
${\mathscr{X}}$ is the unbounded smooth aperture domain 
(see \cite[Theorem~17]{Hey76}).
The three spaces ${\mathscr{V}}$, ${\mathscr{X}}$ and ${\mathscr{W}}$ 
all contain ${\mathscr{D}}$,   ${\mathscr{V}}$ and ${\mathscr{X}}$ are dense 
subspaces of ${\mathscr{H}}$, ${\mathscr{W}}$ is a dense subspace of
${\mathscr{E}}$ by definition. 
Moreover, ${\mathscr{V}}$ and ${\mathscr{X}}$ are closed in ${\mathcal{V}}$ 
and ${\mathscr{W}}$ is closed in ${\mathcal{W}}$.

\subsection{Weak- and pseudo-Dirichlet Laplacians}
\label{subsec:weakDelta}

We now briefly describe how to define the Laplacian with homogeneous Dirichlet 
boun\-da\-ry conditions in a weak sense: depending on how the boundary conditions 
are modelled, different operators appear. Recall that since we do not impose any 
regularity on the boundary of our domain $\Omega$, it does not make 
sense to talk about traces.
We start by defining the bilinear form 
${\frak{a}} : {\mathcal{W}}\times {\mathcal{W}}\to {\mathds{R}}$ by
\begin{equation}
\label{eq:form-a}
{\frak{a}}(u,v):=\langle \nabla u,\nabla v\rangle_{\mathcal{H}}
= \sum_{j=1}^d \langle \partial_ju,\partial_jv\rangle_{\mathcal{H}}.
\end{equation}
The forms ${\frak{a}}$ and ${\frak{a}}|_{{\mathcal{V}} \times {\mathcal{V}}}$ 
are associated with analytic semigroups of contractions on 
${\mathcal{H}}$ (see, e.g., \cite[\S VI.2]{Kat80}). 
Let $-\Delta_D^\Omega$ be the operator associated with the form
${\frak{a}}|_{{\mathcal{V}} \times {\mathcal{V}}}$ and let 
$-\Delta_D^{\overline{\Omega}}$
be the operator associated with the form ${\frak{a}}$.
Following \cite{AD08}, we call $\Delta_D^\Omega$ the {\tt (weak-)Dirichlet Laplacian}
and $\Delta_D^{\overline{\Omega}}$ the {\tt pseudo-Dirichlet Laplacian}.
They are self-adjoint (unbounded) operators in ${\mathcal{H}}$.
The interest of considering the weak-Dirichlet and the pseudo-Dirichlet 
Laplacians lies in particular in domain perturbation problems.

Let $\Omega,\Omega_1,\Omega_2,\ldots$ be bounded open subsets of 
${\mathds{R}}^d$.
We say that $\Omega_n\uparrow\Omega$ as $n\to\infty$ if 
$\Omega_n\subset\Omega_{n+1}$ for all $n\in{\mathds{N}}$ and 
for each compact subset $K\subset \Omega$ there exists an
$n \in{\mathds{N}}$ with $K\subset\Omega_n$.
We say that $\Omega_n\downarrow\Omega$ as $n\to\infty$ if 
$\Omega_n\supset\Omega_{n+1} \supset\overline{\Omega}$ for all $n\in{\mathds{N}}$
and $\lim_{n \to \infty} |(\Omega_n\cap B)\setminus \overline{\Omega}| = 0$
for every ball $B$, where $| \cdot |$ denotes the Lebesgue measure in 
${\mathds{R}}^d$.

If $f \in {\mathcal{H}}$, then we denote by 
$\tilde f \in L^2({\mathds{R}}^d,{\mathds{R}}^d)$ 
the extension by $0$ of $f$ to ${\mathds{R}}^d$.

The following results have been established in \cite[\S3]{AD08}.
See also \cite[\S6]{Ar01} and \cite[\S6 and \S7]{Da03}.

\begin{proposition}
\label{prop:approxDelta}
Let $\Omega,\Omega_1,\Omega_2,\ldots$ be bounded open subsets of 
${\mathds{R}}^d$.
\begin{enumerate}[a.]
\item 
\label{prop:approxDelta-1}
Suppose that $\Omega_n\uparrow\Omega$ as $n\to\infty$. Then 
\begin{align*}
\lim_{n \to \infty} 
\bigl[\bigl({\rm I}+(-\Delta_D^{\Omega_n})\bigr)^{-1}(f_{|_{\Omega_n}})
\bigr]_{|_\Omega}^{\widetilde{\ \hphantom{|_\Omega}}}
& =  \bigl({\rm I}+(-\Delta_D^\Omega)\bigr)^{-1}f \quad \mbox{and}  \\
\lim_{n \to \infty} 
\bigl[\bigl({\rm I}+(-\Delta_D^{\overline{\Omega}_n})\bigr)^{-1}(f_{|_{\Omega_n}})
\bigr]_{|_\Omega}^{\widetilde{\ \hphantom{|_\Omega}}}
& =  \bigl({\rm I}+(-\Delta_D^\Omega)\bigr)^{-1}f
\end{align*}
in ${\mathcal{H}}$ for all $f\in {\mathcal{H}}$.
\item 
\label{prop:approxDelta-2}
Suppose that $\Omega_n \downarrow\Omega$ as $n\to\infty$.
Then 
\begin{align*}
\lim_{n \to \infty} 
\bigl[\bigl({\rm I}+(-\Delta_D^{\overline{\Omega}_n})\bigr)^{-1}(\tilde{f}_{|_{\Omega_n}})
\bigr]_{|_\Omega}
& =  \bigl({\rm I}+(-\Delta_D^{\overline{\Omega}})\bigr)^{-1}f \quad \mbox{and}  \\
\lim_{n \to \infty} 
\bigl[\bigl({\rm I}+(-\Delta_D^{\Omega_n})\bigr)^{-1}(\tilde{f}_{|_{\Omega_n}})
\bigr]_{|_\Omega}
& =  \bigl({\rm I}+(-\Delta_D^{\overline{\Omega}})\bigr)^{-1}f
\end{align*}
in ${\mathcal{H}}$ for all $f\in {\mathcal{H}}$.
\end{enumerate}
\end{proposition}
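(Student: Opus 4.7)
Proof proposal. The approach is Mosco convergence of the associated closed symmetric forms. Recall: for closed positive symmetric forms $\mathfrak{a}_n,\mathfrak{a}$ on a Hilbert space $\mathcal{K}$ (extended by $+\infty$ outside their domains) with self-adjoint realisations $A_n,A$, the strong convergence $(\mathrm{I}+A_n)^{-1}\to(\mathrm{I}+A)^{-1}$ is equivalent to the two Mosco conditions: (M1) for every $u\in D(\mathfrak{a})$ there exists $u_n\in D(\mathfrak{a}_n)$ with $u_n\to u$ in $\mathcal{K}$ and $\mathfrak{a}_n(u_n)\to\mathfrak{a}(u)$; (M2) $\mathfrak{a}(u)\le\liminf_n\mathfrak{a}_n(u_n)$ whenever $u_n\rightharpoonup u$ weakly in $\mathcal{K}$. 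I place everything on a common Hilbert space by zero-extension: $\mathcal{K}=\mathcal{H}$ in part~a.\ (since $\Omega_n\subset\Omega$), and $\mathcal{K}=L^2(\Omega_1,\mathds{R}^d)$ in part~b.\ (since $\Omega\subset\Omega_n\subset\Omega_1$). Under these identifications the form domains become monotone -- increasing in a., decreasing in b. -- which makes the Mosco limits explicitly computable.

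For part~a.\ ($\Omega_n\uparrow\Omega$), the form domains grow: $H^1_0(\Omega_n)\subset H^1_0(\Omega_{n+1})$ and $\mathcal{W}_n\subset\mathcal{W}_{n+1}$, the latter because a vanishing condition almost everywhere on $\overline{\Omega_n}^c$ implies vanishing almost everywhere on the smaller set $\overline{\Omega_{n+1}}^c$. Condition (M1) is verified uniformly for both forms: any $u\in H^1_0(\Omega)$ is an $H^1$-limit of $\varphi_k\in{\mathcal{D}}$ with compact supports eventually contained in some $\Omega_{n(k)}$, so a diagonal extraction produces $u_n\in {\mathscr{C}}_c^\infty(\Omega_n,\mathds{R}^d)\subset H^1_0(\Omega_n)\cap\mathcal{W}_n$ serving as recovery sequence for both resolvent limits. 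Condition (M2) follows from weak lower semicontinuity of the Dirichlet integral; the delicate point is that a weakly convergent sequence from $\mathcal{W}_n$ must have its limit in $H^1_0(\Omega)$, which one settles by passing via the exhaustion property to a subsequence along which $\overline{\Omega_n}$ is compactly contained in $\Omega$, then mollifying the $H^1(\mathds{R}^d)$-extension to land in ${\mathscr{C}}_c^\infty(\Omega,\mathds{R}^d)$.

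For part~b.\ ($\Omega_n\downarrow\Omega$ with $\overline{\Omega}\subset\Omega_n$), the form domains shrink. For (M1), given $u=U|_\Omega\in\mathcal{W}(\Omega)$ with $U\in H^1(\mathds{R}^d)$ and $U=0$ a.e.\ on $\overline{\Omega}^c$, I set $u_n:=U|_{\Omega_n}$; since $\mathrm{supp}\,U\subset\overline{\Omega}$ is a compact subset of $\Omega_n$, mollification in $\mathds{R}^d$ produces ${\mathscr{C}}_c^\infty(\Omega_n,\mathds{R}^d)$-approximants, so $u_n\in H^1_0(\Omega_n)\cap\mathcal{W}_n$ and serves for both forms. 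The zero-extensions of $u_n$ and $u$ to $\Omega_1$ coincide almost everywhere (using $|\partial\Omega|=0$ together with $U=0$ a.e.\ on $\overline{\Omega}^c$), and $|\mathfrak{a}_n(u_n)-\mathfrak{a}(u)|=\int_{\Omega_n\setminus\overline{\Omega}}|\nabla U|^2\to 0$ by absolute continuity of the integral and the hypothesis $|\Omega_n\setminus\overline{\Omega}|\to 0$. For (M2), let $u_n\in\mathcal{W}_n$ (resp.\ $H^1_0(\Omega_n)$) satisfy $u_n\rightharpoonup u$ with uniformly bounded gradients; their $H^1(\mathds{R}^d)$-extensions $U_n$ converge weakly to some $U$ with $U|_{\Omega_1}=u$, and the decisive step is to show $U=0$ a.e.\ on $\overline{\Omega}^c$, which follows from the a.e.\ vanishing of $U_n$ on $\overline{\Omega_n}^c$ together with $|(\Omega_n\cap B)\setminus\overline{\Omega}|\to 0$, applied through Mazur's lemma to obtain strong $L^2$-convergence of convex combinations and hence pointwise almost-everywhere convergence along a subsequence.

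The main obstacle is verifying (M2) in case~b.: upgrading the individual pointwise conditions "$U_n=0$ on $\overline{\Omega_n}^c$" to the limit "$U=0$ on $\overline{\Omega}^c$" under mere weak $H^1$-convergence. This is precisely where the quantitative assumption $|(\Omega_n\cap B)\setminus\overline{\Omega}|\to 0$, strictly stronger than $\bigcap_n\Omega_n=\Omega$, enters the argument. A milder difficulty in (M2) of case~a.\ (ensuring the weak limit lies in the smaller space $H^1_0(\Omega)$ rather than only $\mathcal{W}(\Omega)$) is handled symmetrically, via the compact exhaustion property combined with mollification in $\mathds{R}^d$.
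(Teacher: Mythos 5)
The paper does not prove this proposition directly: it quotes \cite[\S3]{AD08} for the first limit of a.\ and the second limit of b., and obtains the two remaining ``mixed'' limits in Remark~\ref{rem:comparison} by positivity, sandwiching resolvents between monotone ones via \cite[Proposition~2.3]{AD08}. Your Mosco-convergence argument is therefore a genuinely different, order-free route; it is in fact much closer in spirit to the paper's own proofs of the Stokes analogues (Theorems~\ref{thm:increasingOmega} and \ref{thm:decreasingOmega}), which are hand-made verifications of exactly your two conditions (M1)--(M2). The weak-Dirichlet half of part~a.\ and all of part~b.\ are sound: your recovery sequence $u_n=U_{|_{\Omega_n}}$ (using ${\rm supp}\,U\subset\overline{\Omega}\Subset\Omega_n$) and your identification of the weak limit via $|\Omega_n\setminus\overline{\Omega}|\to0$ reproduce the mechanism of the paper's Theorem~\ref{thm:decreasingOmega}.

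There is, however, a genuine gap in (M2) for the pseudo-Dirichlet forms in part~a. You claim that the exhaustion property lets you pass to a subsequence along which $\overline{\Omega_n}$ is compactly contained in $\Omega$. It does not: the definition of $\Omega_n\uparrow\Omega$ only requires $\Omega_n\subset\Omega_{n+1}$ and that every compact $K\subset\Omega$ be eventually contained in some $\Omega_n$; it imposes no constraint on $\overline{\Omega_n}$, and the constant sequence $\Omega_n=\Omega$ satisfies it. For that sequence a bounded sequence in ${\mathcal{W}}_n={\mathcal{W}}$ need only have its weak limit in ${\mathcal{W}}$, not in ${\mathcal{V}}=H^1_0(\Omega,{\mathds{R}}^d)$, and when ${\mathcal{V}}\subsetneq{\mathcal{W}}$ (domains with cracks, \cite[Section~7]{AD08}) your (M2) is false --- the second limit of a.\ would then assert $\bigl({\rm I}+(-\Delta_D^{\overline{\Omega}})\bigr)^{-1}f=\bigl({\rm I}+(-\Delta_D^{\Omega})\bigr)^{-1}f$. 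So this step cannot be repaired by a cleverer extraction; it needs an additional geometric hypothesis such as $\overline{\Omega_n}\subset\Omega_{n+1}$ (or $\overline{\Omega_n}\subset\Omega$). Note that the paper's own sandwich $\bigl({\rm I}-\Delta_D^{\Omega_n}\bigr)^{-1}\le\bigl({\rm I}-\Delta_D^{\overline{\Omega}_n}\bigr)^{-1}\le\bigl({\rm I}-\Delta_D^{\Omega}\bigr)^{-1}$ relies on the same fact for its last inequality, so you have correctly located the delicate point, but the justification you offer for it is wrong. With $\overline{\Omega_n}\subset\Omega$ made explicit, your mollification of the $H^1({\mathds{R}}^d)$-extension (supported in the compact set $\overline{\Omega_n}$, at scale below ${\rm dist}(\overline{\Omega_n},\partial\Omega)$) does give ${\mathcal{W}}_n\subset H^1_0(\Omega,{\mathds{R}}^d)$ and closes (M2); everything else in your argument then stands.
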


\begin{remark}
\label{rem:comparison}
Strictly speaking, Proposition \ref{prop:approxDelta} has been proved in \cite{AD08}
for scalar valued functions $f\in L^2(\Omega,{\mathds{R}})$, and only the first part 
of \ref{prop:approxDelta-1} (\cite[Proposition~3.2]{AD08}) and the second part of 
\ref{prop:approxDelta-2} (\cite[Proposition~3.5]{AD08}) can be found in that reference.
Using \cite[Proposition~2.3]{AD08} establishing monotonicity properties of the 
resolvents of the weak-Dirichlet Laplacian and the pseudo-Dirichlet Laplacian with 
respect to the inclusion of domains, the other two limits are immediate. 
\end{remark}

\subsection{The weak-Dirichlet Stokes operator}
\label{subsec:weak}

Since the spaces ${\mathscr{V}}$ and ${\mathscr{X}}$ are 
dense subspaces of the Hilbert space ${\mathscr{H}}$, one can define 
two Dirichlet types of Stokes operators in ${\mathscr{H}}$.
Recall the form ${\frak{a}} : {\mathcal{W}} \times {\mathcal{W}} \to {\mathds{R}}$ 
from \eqref{eq:form-a}
\[
{\frak{a}}(u,v)
:=\langle \nabla u,\nabla v\rangle_{\mathcal{H}}
= \sum_{j=1}^d \langle \partial_ju,\partial_jv\rangle_{\mathcal{H}}.  
\]
Then ${\frak{a}}|_{{\mathscr{X}} \times {\mathscr{X}}}$ is a positive symmetric 
densely defined closed form in ${\mathscr{H}}$. Let ${\mathcal{B}}$ be the 
operator associated with ${\frak{a}}|_{{\mathscr{X}} \times {\mathscr{X}}}$.
Then ${\mathcal{B}}$ is self-adjoint and ${\mathcal{B}}$ is the
Stokes operator considered in \cite{Mo06}.

Since ${\mathscr{V}}\subset{\mathscr{X}}$ we can also define ${\mathcal{B}}_0$ 
to be the self-adjoint operator in ${\mathscr{H}}$ associated with the form 
${\frak{a}}|_{{\mathscr{V}} \times {\mathscr{V}}}$.
We call ${\mathcal{B}}_0$ the {\tt weak-Dirichlet Stokes operator}.
This Stokes operator is the one which was considered by H.\,Sohr in 
\cite[Chapter~3, \S2.1]{So01}.

The operators ${\mathcal{B}}$ and ${\mathcal{B}}_0$ are both negative
generators of analytic semigroups in ${\mathscr{H}}$.

Each of the cases above models differently spaces of divergence free vector fields
with zero boundary conditions.
As already mentioned before, they coincide in the 
case of bounded Lipschitz domains and consequently then also the two
operators ${\mathcal{B}}$ and ${\mathcal{B}}_0$ coincide.

The relation between the weak-Dirichlet Laplacian and the weak-Dirichlet
Stokes operator is described in the following commutative diagram:
\[
\xymatrix{
{\mathscr{V}}\  \ar@/_1.5pc/[dd]_{{\mathcal{B}}_0} \ar@{^{(}->}[d]_d
\ar@{^{(}->}[r]^{J_0} & {\mathcal{V}}\ar@{^{(}->}[d]^d 
\ar@/_-1.5pc/[dd]^{(-\Delta_D^\Omega)\ }\\
{\mathscr{H}}\  \ar@{^{(}->}[d]_d \ar@{^{(}->}[r]^{J} & {\mathcal{H}} 
\ar@<5pt>[l]^{{\mathbb{P}}=J'} \ar@{^{(}->}[d]^d\\
{\mathscr{V}}' & {\mathcal{V}}'\ar[l]^{{\mathbb{P}}_1=J_0'}
}
\]
where $J_0$ is the restriction of $J$ to ${\mathscr{V}}$ and ${\mathbb{P}}_1$,
its adjoint operator, is the extension of the Leray projection ${\mathbb{P}}$
to ${\mathcal{V}}'$. What this says in particular is that 
${\mathcal{B}}_0={\mathbb{P}}_1(-\Delta_D^\Omega)J_0$.

\subsection{The pseudo-Dirichlet Stokes operator}
\label{subsec:pseudo}

If we now restrict the form ${\frak{a}}$ to ${\mathscr{W}}\times{\mathscr{W}}$
we obtain a positive symmetric densely defined closed form in ${\mathscr{E}}$. 
We then define ${\mathcal{A}}$ to be the self-adjoint operator in ${\mathscr{E}}$ 
associated with ${\frak{a}}|_{{\mathscr{W}}\times{\mathscr{W}}}$. We call
${\mathcal{A}}$ the {\tt pseudo-Dirichlet Stokes operator}. It is the negative 
generator of an analytic semigroup in ${\mathscr{E}}$.

As said before, in the 
case of a bounded domain $\Omega$ with Lipschitz boundary, the spaces
${\mathscr{X}}$, ${\mathscr{V}}$ and ${\mathscr{W}}$ coincide as well as
the spaces ${\mathscr{H}}$ and ${\mathscr{E}}$, then so do the
operators ${\mathcal{B}}$, ${\mathcal{B}}_0$ and ${\mathcal{A}}$.

The relation between the pseudo-Dirichlet Laplacian and the pseudo-Dirichlet
Stokes operator is described in the following commutative diagram:
\[
\xymatrix{
{\mathscr{W}}\  \ar@/_1.5pc/[dd]_{{\mathcal{A}}} \ar@{^{(}->}[d]_d
\ar@{^{(}->}[r]^{L_0} & {\mathcal{W}}\ar@{^{(}->}[d]^d 
\ar@/_-1.5pc/[dd]^{(-\Delta_D^{\overline{\Omega}})\ }\\
{\mathscr{E}}\  \ar@{^{(}->}[d]_d \ar@{^{(}->}[r]^{L} & {\mathcal{H}} 
\ar@<5pt>[l]^{{\mathbb{Q}}=L'} \ar@{^{(}->}[d]^d\\
{\mathscr{W}}' & {\mathcal{W}}'\ar[l]^{{\mathbb{Q}}_1=L_0'}
}
\]
where $L_0$ is the restriction of $L$ to ${\mathscr{W}}$ and ${\mathbb{Q}}_1$,
its adjoint operator, is the extension of the projection ${\mathbb{Q}}$ from 
\S\ref{subsec:anotherdec} to ${\mathcal{W}}'$. What this says in particular is that 
${\mathcal{A}}={\mathbb{Q}}_1(-\Delta_D^{\overline{\Omega}})L_0$.

\section{Domain perturbation}
\label{sec:domainpert}

Similar results as those stated in Proposition~\ref{prop:approxDelta} hold for 
the different Dirichlet Stokes operators described above.
Roughly speaking, resolvents of the different Stokes operators converge to the 
resolvent of the weak-Dirichlet Stokes operator in ${\mathscr{H}}$ in the case 
of an increasing sequence of open sets and to the resolvent of the pseudo-Dirichlet 
Stokes operator in ${\mathscr{E}}$ in the case of a decreasing sequence of open sets.
 
\subsection{Increasing sequence of domains}
 
\begin{theorem}
\label{thm:increasingOmega}
Let $\Omega,\Omega_1,\Omega_2,\ldots$ be bounded open subsets of 
${\mathds{R}}^d$.
Suppose that $\Omega_n\uparrow\Omega$ as $n\to\infty$.
For all $n \in {\mathds{N}}$ denote by ${\mathbb{P}}_n$ the Leray projection from 
$L^2(\Omega_n,{\mathds{R}}^d)$ onto ${\mathscr{H}}_n$ (the corresponding space 
of divergence-free vector fields as in \eqref{def:decompH}), $I_n$ the identity 
operator on ${\mathscr{H}}_n$, 
${\mathscr{V}}_n$ the corresponding form domain
and ${\mathcal{B}}_0^{(n)}$ the corresponding weak-Dirichlet Stokes operator.
Then
\[
\lim_{n \to \infty}
\Bigl(\bigl(I_n+{\mathcal{B}}_0^{(n)}\bigr)^{-1}{\mathbb{P}}_n(f_{|_{\Omega_n}})
\Bigr)_{|_\Omega}^{\widetilde{\ \hphantom{|_\Omega}}}
= (I+{\mathcal{B}}_0)^{-1}f
\]
in $L^2(\Omega,{\mathds{R}}^d)$ for all $f\in{\mathscr{H}}$.
\end{theorem}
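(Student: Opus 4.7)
The plan is to run a standard variational (Mosco-type) argument exploiting the monotonicity of the form domains under zero-extension. Set $g_n:={\mathbb{P}}_n(f_{|_{\Omega_n}})\in{\mathscr{H}}_n$, $u_n:=\bigl(I_n+{\mathcal{B}}_0^{(n)}\bigr)^{-1}g_n\in{\mathscr{V}}_n$, and write $\tilde u_n$ for its extension by zero to $\Omega$. The key structural step is the embedding ${\mathscr{V}}_n\hookrightarrow{\mathscr{V}}$: every smooth compactly supported divergence-free vector field on $\Omega_n$ extends by zero to an element of ${\mathscr{D}}$, and zero-extension is an isometry $H^1_0(\Omega_n,{\mathds{R}}^d)\hookrightarrow H^1_0(\Omega,{\mathds{R}}^d)$; passing to the closure yields $\tilde u_n\in{\mathscr{V}}$. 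Using the selfadjointness of ${\mathbb{P}}_n$ and the fact that ${\mathbb{P}}_n v=v$ for $v\in{\mathscr{H}}_n$, the defining resolvent equation rewrites as
\[
\langle\tilde u_n,\tilde v\rangle_{\mathcal{H}}+\langle\nabla\tilde u_n,\nabla\tilde v\rangle_{\mathcal{H}}=\langle f,\tilde v\rangle_{\mathcal{H}}\qquad\text{for all }v\in{\mathscr{V}}_n.
\]

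Next I would test against $v=u_n$ to obtain the a priori bound $\|\tilde u_n\|_{H^1_0(\Omega,{\mathds{R}}^d)}\leq\|f\|_{\mathcal{H}}$. Boundedness of $\Omega$ together with the Rellich--Kondrachov compactness theorem then produces a subsequence such that $\tilde u_n\rightharpoonup w$ weakly in $H^1_0(\Omega,{\mathds{R}}^d)$ and $\tilde u_n\to w$ strongly in ${\mathcal{H}}$. Since ${\mathscr{V}}$ is closed, hence weakly closed, in $H^1_0(\Omega,{\mathds{R}}^d)$, the limit satisfies $w\in{\mathscr{V}}$.

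The last step identifies $w$. For any $\varphi\in{\mathscr{D}}$, the compactness of ${\rm supp}\,\varphi$ combined with $\Omega_n\uparrow\Omega$ provides some $N$ with ${\rm supp}\,\varphi\subset\Omega_n$ for all $n\geq N$, so $\varphi\in{\mathscr{V}}_n$. Plugging $v=\varphi$ into the variational identity and letting $n\to\infty$ yields
\[
\langle w,\varphi\rangle_{\mathcal{H}}+\langle\nabla w,\nabla\varphi\rangle_{\mathcal{H}}=\langle f,\varphi\rangle_{\mathcal{H}}.
\]
By density of ${\mathscr{D}}$ in ${\mathscr{V}}$ the identity extends to every $\varphi\in{\mathscr{V}}$, which, combined with $w\in{\mathscr{V}}$, forces $w=(I+{\mathcal{B}}_0)^{-1}f$. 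Uniqueness of the limit upgrades the convergence from a subsequence to the full sequence, and the strong convergence in ${\mathcal{H}}$ already furnished by Rellich is precisely the claimed limit.

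The main obstacle I anticipate is the very first observation: that the zero-extension $\tilde u_n$ genuinely lies in ${\mathscr{V}}$, and symmetrically that the family of test spaces $({\mathscr{V}}_n)_n$ is collectively dense in ${\mathscr{V}}$ so that one can pass to the limit on both sides of the resolvent identity. Both points rely crucially on the compact-exhaustion hypothesis $\Omega_n\uparrow\Omega$, which is strictly stronger than mere set-theoretic inclusion; it is also what distinguishes the present statement from the decreasing-sequence companion result, where the natural target form domain is ${\mathscr{W}}$ rather than ${\mathscr{V}}$.
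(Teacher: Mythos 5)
Your proof is correct, and its skeleton (zero-extension embedding ${\mathscr{V}}_n\hookrightarrow{\mathscr{V}}$, the variational identity, the a priori bound $\|\tilde u_n\|_{H^1_0}\le\|f\\|$, identification of the weak limit by testing against ${\mathscr{D}}$ and using that each $\varphi\in{\mathscr{D}}$ eventually has support in $\Omega_n$, then the subsequence principle) is exactly the paper's. The one genuine divergence is how you upgrade weak to strong $L^2(\Omega,{\mathds{R}}^d)$ convergence: you invoke the compactness of $H^1_0(\Omega,{\mathds{R}}^d)\hookrightarrow L^2(\Omega,{\mathds{R}}^d)$ (valid for an arbitrary bounded open $\Omega$ with no boundary regularity, precisely because $H^1_0$ elements extend by zero to a large ball --- worth saying explicitly, since the usual Rellich--Kondrachov statement for $H^1$ requires an extension property), whereas the paper avoids compactness altogether: it combines $\liminf_n\|\tilde u_n\|_2\ge\|u\|_2$ from weak convergence with $\limsup_n\|\tilde u_n\|_2\le\|u\|_2$, the latter obtained by writing $\|\tilde u_n\|_2^2=\langle f,\tilde u_n\rangle-\|\nabla\tilde u_n\|_2^2$ from the variational identity with $v=u_n$ and using weak lower semicontinuity of the gradient norm together with \eqref{eq:u,v}. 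Your route is shorter under the stated boundedness hypothesis; the paper's norm-comparison argument is form-intrinsic, does not use any compact embedding, and is the same template the author reuses verbatim for the decreasing-domain result (Theorem~\ref{thm:decreasingOmega}), where it would also extend to situations without compactness. Both are complete proofs of the statement as given.
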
 

\begin{proof}
For all $n \in {\mathds{N}}$ define 
$u_n = \bigl({\rm I}_n+{\mathcal{B}}_0^{(n)}\bigr)^{-1}
{\mathbb{P}}_n(f_{|_{\Omega_n}})$.
Then $u_n \in {\mathscr{V}}_n$ and ${\tilde{u}_n}{}_{|_{\Omega}} \in {\mathscr{V}}$ and 
\begin{equation}
\label{eq:u_n}
\int_{\Omega_n}\nabla u_n\cdot\nabla v + \int_{\Omega_n}u_n\cdot v
=\int_{\Omega_n} ({\mathbb{P}}_n(f_{|_{\Omega_n}})) \cdot v
=\int_{\Omega_n}f\cdot v
\end{equation}
for all $v \in {\mathscr{V}}_n$.
Choosing $v = u_n$ gives
\[
\int_{\Omega_n}|\nabla u_n|^2+\int_{\Omega_n}|u_n|^2
=\int_{\Omega_n}{\mathbb{P}}_n(f_{|_{\Omega_n}})\cdot u_n
\le \|f\|_2\Bigl(\int_{\Omega_n}|u_n|^2\Bigr)^{1/2}.  
\]
This implies that $({\tilde{u}_n}{}_{|_{\Omega}})_{n\in{\mathds{N}}}$ is a bounded 
sequence in ${\mathscr{V}}$.
Passing to a subsequence if necessary, there exists a $u \in {\mathscr{V}}$ 
such that $\displaystyle{\lim_{n\to \infty} {\tilde{u}_n}{}_{|_{\Omega}} = u}$ weakly 
in ${\mathscr{V}}$.
Let $v\in{\mathscr{D}}$. There exists an $N\in{\mathds{N}}$ such that 
$\tilde{v}_{|_{\Omega_n}}\in {\mathscr{D}}_n$ for all $n\ge N$.
By definition of $u_n$ we then have for all $n\ge N$ that
\[
{\frak{a}}({\tilde{u}_n}{}_{|_{\Omega}},v) 
+ \langle {\tilde{u}_n}{}_{|_{\Omega}},v \rangle_{L^2(\Omega,{\mathds{R}}^d)}
=\langle f,v\rangle_{L^2(\Omega,{\mathds{R}}^d)}.  
\]
Taking the limit as $n\to\infty$ we obtain that 
\begin{equation}
\label{eq:u,v}
{\frak{a}}(u,v)+\langle u,v\rangle_{L^2(\Omega,{\mathds{R}}^d)}
=\langle f,v\rangle_{L^2(\Omega,{\mathds{R}}^d)}.
\end{equation}
This is true for all $v\in {\mathscr{D}}$.
Then by continuity and density, \eqref{eq:u,v} is valid for all $v\in{\mathscr{V}}$.
This shows that $u\in {\sf{D}}({\mathcal{B}}_0)$ and $u=(I+{\mathcal{B}}_0)^{-1} f$.

It remains to show that
$\displaystyle{\lim_{n \to \infty} {\tilde{u}_n}{}_{|_{\Omega}} = u}$ strongly in 
$L^2(\Omega,{\mathds{R}}^d)$.
Since $\displaystyle{\lim_{n \to \infty} {\tilde{u}_n}{}_{|_{\Omega}} = u}$ weakly in 
$L^2(\Omega,{\mathds{R}}^d)$, 
\[
\liminf_{n \to \infty} 
\|{\tilde{u}_n}{}_{|_{\Omega}}\|_{L^2(\Omega,{\mathds{R}}^d)} 
\geq \|u\|_{L^2(\Omega,{\mathds{R}}^d)}.
\]
Comparing 
$\displaystyle{\limsup_{n\to\infty}\|u_n\|_2}$
and $\displaystyle{\liminf_{n\to\infty}\|u_n\|_2}$, it suffices to show
that 
\[
\limsup_{n \to \infty} 
\|{\tilde{u}_n}{}_{|_{\Omega}}\|_{L^2(\Omega,{\mathds{R}}^d)} 
\leq \|u\|_{L^2(\Omega,{\mathds{R}}^d)}.
\]
Let $n \in {\mathds{N}}$. Choose $v = u_n$ in \eqref{eq:u_n}.
Then 
\[
\int_\Omega |{\tilde{u}_n}{}_{|_{\Omega}}|^2
= \int_\Omega f\cdot {\tilde{u}_n}{}_{|_{\Omega}} 
- \int_{\Omega}\bigl|\nabla {\tilde{u}_n}{}_{|_{\Omega}}\bigr|^2.  
\]
Since $\displaystyle{\lim_{n \to \infty} {\tilde{u}_n}{}_{|_{\Omega}} = u}$ weakly in 
${\mathscr{V}}$, and hence in $H^1_0(\Omega,{\mathds{R}}^d)$, one deduces that 
\[
\lim_{n \to \infty} \int_\Omega f\cdot {\tilde{u}_n}{}_{|_{\Omega}} 
= \int_\Omega f\cdot u\quad\mbox{and} \quad 
\lim_{n \to \infty} \partial_k {\tilde{u}_n}{}_{|_{\Omega}} = \partial_k u \mbox{ weakly in }
L^2(\Omega,{\mathds{R}}^d)\mbox{ for all }k \in \{ 1,\ldots,d \}.
\]
This implies $\displaystyle{\|\partial_k u\|_{L^2(\Omega,{\mathds{R}}^d)} 
\leq \liminf_{n \to \infty} \|\partial_k {\tilde{u}_n}{}_{|_{\Omega}}\|_{L^2(\Omega,{\mathds{R}}^d)}}$.
Consequently, 
\begin{align*}
\limsup_{n\to\infty}\|{\tilde{u}_n}{}_{|_{\Omega}}\|_{L^2(\Omega,{\mathds{R}}^d)}^2
& =  \lim_{n\to\infty} \Bigl(\int_{\Omega}f\cdot {\tilde{u}_n}{}_{|_{\Omega}}\Bigr)
  -\liminf_{n\to\infty}\|\nabla {\tilde{u}_n}{}_{|_{\Omega}}\|_{L^2(\Omega,{\mathds{R}}^d)}^2\\
& \leq  \langle f, u \rangle_{L^2(\Omega,{\mathds{R}}^d)} 
- \|\nabla u\|_{L^2(\Omega,{\mathds{R}}^d)}^2 
=\|u\|_{L^2(\Omega,{\mathds{R}}^d)}^2,
\end{align*}
where the last equality follows from \eqref{eq:u,v}.
Then $\displaystyle{\lim_{n \to \infty} {\tilde{u}_n}{}_{|_\Omega} = u}$ strongly in 
$L^2(\Omega,{\mathds{R}}^d)$. One concludes by the fact that every sequence
for which every subsequence posseses a convergent subsequence to a unique 
limit is convergent.
\end{proof}

\subsection{Decreasing sequence of domains}

If $f \in L^2(\Omega,{\mathds{R}}^d)$, then we denote by 
$\tilde f \in L^2({\mathds{R}}^d,{\mathds{R}}^d)$ 
the extension by $0$ of $f$ to ${\mathds{R}}^d$.

\begin{theorem}
\label{thm:decreasingOmega}
Let $\Omega,\Omega_1,\Omega_2,\ldots$ be bounded open subsets of 
${\mathds{R}}^d$. Suppose that the $d$-dimensional Hausdorff measure of
$\partial \Omega$ and $\partial \Omega_n$ for all $n \in {\mathds{N}}$ is zero. 
Suppose that $\Omega_n \downarrow \Omega$ as 
$n\to\infty$. For all $n \in {\mathds{N}}$ denote by ${\mathbb{Q}}_n$ the 
projection from $L^2(\Omega_n,{\mathds{R}}^d)$ onto ${\mathscr{E}}_n$ 
(the corresponding space of divergence-free vector fields as in 
\eqref{def:anotherdecompH}), ${\mathcal{I}}_n$ the identity 
operator on ${\mathscr{E}}_n$, ${\mathscr{W}}_n$ the corresponding form domain
and ${\mathcal{A}}_n$ the corresponding pseudo-Dirichlet Stokes operator.
Then
\[
\lim_{n \to \infty} \Bigl(\bigl({\mathcal{I}}_n+{\mathcal{A}}_n\bigr)^{-1} 
\tilde f_{|_{\Omega_n}} \Bigr)_{|_{\Omega}}
= ({\mathcal{I}}+{\mathcal{A}})^{-1} f
\]
in $L^2(\Omega,{\mathds{R}}^d)$ for all $f\in{\mathscr{E}}$.
\end{theorem}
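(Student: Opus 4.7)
The plan is to mirror the strategy used for Theorem~\ref{thm:increasingOmega}, but exploiting extensions instead of restrictions: every $u_n \in \mathscr{W}_n$ has an $H^1(\mathbb{R}^d,\mathbb{R}^d)$-extension by zero, and the decreasing condition $\Omega_n \supset \overline{\Omega}$ will let test functions travel from $\mathscr{W}$ up to every $\mathscr{W}_n$. Set $u_n = (\mathcal{I}_n+\mathcal{A}_n)^{-1}\tilde f_{|_{\Omega_n}} \in \mathscr{W}_n$. Because $\tilde f$ vanishes outside $\Omega$, the defining variational identity reads
\[
\int_{\Omega_n}\nabla u_n\cdot\nabla v + \int_{\Omega_n} u_n\cdot v = \int_{\Omega_n}\tilde f\cdot v = \int_\Omega f\cdot v|_\Omega
\]
for every $v \in \mathscr{W}_n$. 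Testing against $v = u_n$ and Cauchy--Schwarz give a uniform bound on $\|\tilde u_n\|_{H^1(\mathbb{R}^d,\mathbb{R}^d)}$. Since every $\tilde u_n$ is supported in the fixed compact set $\overline{\Omega}_1$, Rellich--Kondrachov (applied on any ball containing $\overline{\Omega}_1$) yields a subsequence with $\tilde u_n \to u$ weakly in $H^1(\mathbb{R}^d,\mathbb{R}^d)$ and strongly in $L^2(\mathbb{R}^d,\mathbb{R}^d)$.

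Next I identify the limit. Weak convergence preserves $\operatorname{div}\tilde u_n = 0$ in $\mathbb{R}^d$, so $\operatorname{div} u = 0$ in $\mathbb{R}^d$. To show $u = 0$ a.e.\ on $\overline{\Omega}^c$, I test against any $\varphi \in {\mathscr{C}}_c^\infty(\overline{\Omega}^c,\mathbb{R}^d)$ with support $K$: the decreasing condition forces $|\Omega_n \cap K| \to 0$, so
\[
\Bigl|\int \tilde u_n\cdot\varphi\Bigr| \le \|\tilde u_n\|_{L^2(\mathbb{R}^d,\mathbb{R}^d)}\,\|\varphi\|_\infty\,|\Omega_n\cap K|^{1/2} \to 0,
\]
forcing $\int u\cdot\varphi = 0$. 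Since the Hausdorff-measure hypothesis makes $\partial\Omega$ negligible, this gives $u|_\Omega \in \mathscr{W}$.

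The key step is passing to the limit in the variational identity. Given $v \in \mathscr{W}$ with extension $V \in H^1(\mathbb{R}^d,\mathbb{R}^d)$ vanishing on $\overline{\Omega}^c$ and divergence-free on $\mathbb{R}^d$, the inclusion $\overline{\Omega} \subset \Omega_n$ yields $\overline{\Omega}_n^c \subset \overline{\Omega}^c$, so $V$ also vanishes on $\overline{\Omega}_n^c$ and $V|_{\Omega_n} \in \mathscr{W}_n$ for every $n$. Substituting $V|_{\Omega_n}$ into the equation above and using the weak convergence of $\tilde u_n$ and $\nabla\tilde u_n$ in $L^2(\mathbb{R}^d,\mathbb{R}^d)$ (together with $V = 0$ off $\overline{\Omega}$) gives in the limit
\[
\int_\Omega \nabla u|_\Omega\cdot\nabla v + \int_\Omega u|_\Omega\cdot v = \int_\Omega f\cdot v \qquad \text{for every } v\in\mathscr{W},
\]
i.e., $u|_\Omega = (\mathcal{I}+\mathcal{A})^{-1}f$. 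Strong convergence $\tilde u_n|_\Omega \to u|_\Omega$ in $L^2(\Omega,\mathbb{R}^d)$ is automatic from the strong $L^2(\mathbb{R}^d,\mathbb{R}^d)$-convergence, and uniqueness of the resolvent promotes the subsequential statement to the full sequence.

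The main obstacle is the interplay between the divergence-free constraint on $\mathbb{R}^d$ and the support condition used to define $\mathscr{W}$ and $\mathscr{W}_n$: one must check that the weak $H^1$-limit on $\mathbb{R}^d$ sits in the correct ``pseudo'' divergence-free subspace for $\Omega$, and that test functions transfer compatibly. In contrast to the increasing case, where one had to approximate test functions on $\Omega$ by ones supported deep inside $\Omega_n$, here the geometry is in our favour: the inclusion $\overline{\Omega}\subset\Omega_n$ makes test functions extend trivially, and the decreasing-measure hypothesis is tailored exactly to kill the boundary-layer contribution on $\Omega_n\setminus\overline{\Omega}$.
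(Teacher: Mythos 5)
Your argument is correct and follows the same skeleton as the paper's proof: the same variational identity on $\Omega_n$, the same a priori $H^1({\mathds{R}}^d,{\mathds{R}}^d)$ bound from testing with $u_n$, the same identification of the weak limit $U$ (vanishing a.e.\ on $\overline{\Omega}^{\,c}$ via the measure condition, divergence-free by weak closedness, and satisfying the limit equation because $\tilde v_{|_{\Omega_n}}\in{\mathscr{W}}_n$ for every $v\in{\mathscr{W}}$ thanks to $\overline{\Omega}\subset\Omega_n$). The one place you genuinely diverge is the final strong-convergence step: you invoke Rellich--Kondrachov on a ball containing $\overline{\Omega}_1$ (legitimate here, since all the $\tilde u_n$ are supported in the fixed compact set $\overline{\Omega}_1$ and are bounded in $H^1$), whereas the paper avoids compactness entirely and instead upgrades weak to strong $L^2$ convergence by the Radon--Riesz-type comparison of $\liminf$ and $\limsup$ of the norms, using weak lower semicontinuity of $\|\nabla\tilde u_n\|_2$ together with the limit equation tested at $v=u$. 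Your route is shorter and perfectly adequate under the stated boundedness hypotheses; the paper's norm argument is the one that would survive if the domains were unbounded or if one wanted to bypass compact embeddings altogether.
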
 

\begin{proof}
First note that 
$\tilde{f}_{|_{\Omega_n}} \in {\mathscr{E}}_n$ for all $f \in {\mathscr{E}}$ and
$\tilde{u}_{|_{\Omega_n}} \in {\mathscr{W}}_n$ for all $u \in {\mathscr{W}}$ 
and all $n \in {\mathds{N}}$. Fix $f \in {\mathscr{E}}$.
For all $n \in {\mathds{N}}$ define 
$u_n:=\bigl({\mathcal{I}}_n+{\mathcal{A}}_n\bigr)^{-1}
(\tilde{f}_{|_{\Omega_n}})$.
Then $u_n \in {\mathscr{W}}_n$ and 
\begin{equation}
\label{eq:u_nV2}
\int_{{\mathds{R}}^d} \nabla\tilde{u}_n\cdot\nabla\tilde{v} 
+\int_{{\mathds{R}}^d} \tilde{u}_n\cdot\tilde{v}
= \int_{\Omega_n} \tilde{f}_{|_{\Omega_n}} \cdot \tilde{v}
= \int_{{\mathds{R}}^d} \tilde{f}\cdot \tilde{v},
\end{equation}
for all $v \in {\mathscr{W}}_n$.
Choosing $v = u_n$ gives
\[
\|\nabla \tilde{u}_n\|_{L^2({\mathds{R}}^d,{\mathds{R}}^d)}^2 
+ \|\tilde{u}_n\|_{L^2({\mathds{R}}^d,{\mathds{R}}^d)}^2
\leq \|\tilde f\|_{L^2({\mathds{R}}^d,{\mathds{R}}^d)} 
\|\tilde{u}_n\|_{L^2({\mathds{R}}^d,{\mathds{R}}^d)}.  
\]
Hence $(\tilde{u}_n)_{n\in{\mathds{N}}}$ is a bounded sequence in 
$H^1({\mathds{R}}^d,{\mathds{R}}^d)$.
Passing to a subsequence if necessary, there exists a 
$U \in H^1({\mathds{R}}^d,{\mathds{R}}^d)$ 
such that $\displaystyle{\lim_{n \to \infty} \tilde u_n = U}$ 
weakly in $H^1({\mathds{R}}^d,{\mathds{R}}^d)$.

We next show that $U = 0$ a.e.\ on $\overline{\Omega}^{\rm c}$.
Let $\Phi \in C_c^\infty({\mathds{R}}^d,{\mathds{R}}^d)$ and suppose that 
${\rm supp}\, \Phi \subset \overline \Omega^{\rm c}$.
If $n \in {\mathds{N}}$, then 
\[
\Bigl|\int_{{\mathds{R}}^d}\tilde{u}_n\cdot \Phi\Bigr|
\leq \|\tilde u_n\|_{L^2({\mathds{R}}^d,{\mathds{R}}^d)} 
\|\Phi\|_{L^2(\Omega_n,{\mathds{R}}^d)}
\leq \|\tilde f\|_{L^2({\mathds{R}}^d,{\mathds{R}}^d)} 
\|\Phi\|_{L^\infty({\mathds{R}}^d,{\mathds{R}}^d)} 
|\Omega_n \setminus \overline \Omega|^{1/2}.
\]
Since $\displaystyle{\lim_{n \to \infty} |\Omega_n \setminus \overline \Omega| = 0}$
it follows that 
\[
\int_{{\mathds{R}}^d}U\cdot \Phi
= \lim_{n\to\infty} \int_{{\mathds{R}}^d}\tilde{u}_n\cdot \Phi
= 0.
\]
So $U = 0$ a.e.\ on $\overline{\Omega}^{\rm c}$.
Set $u = U|_\Omega$. Then $u \in {\mathcal{W}}$.

To prove that $u\in {\mathscr{W}}$, it remains to prove that ${\rm div}\,U=0$
in ${\mathds{R}}^d$. This is straightforward since for all 
$\nabla p\in L^2({\mathds{R}}^d,{\mathds{R}}^d)$ and for all $n\in{\mathds{N}}$,
\[
\int_{{\mathds{R}}^d}U\cdot \nabla p
\xleftarrow[\infty\leftarrow n]{}\int_{{\mathds{R}}^d}\tilde{u}_n\cdot\nabla p=0.
\]
Now, taking the limit as $n$ goes to $\infty$ in \eqref{eq:u_nV2}
for $v\in{\mathscr{W}}$, we obtain that 
\begin{equation}
\label{eq:limU}
\int_{{\mathds{R}}^d}\nabla U\cdot\nabla\tilde{v} +\int_{{\mathds{R}}^d} U\cdot\tilde{v}
={\frak{a}}(u,v)+\langle u,v\rangle=\langle f,v\rangle
=\int_{{\mathds{R}}^d} \tilde{f}\cdot\tilde{v}.
\end{equation}
Therefore, $u\in{\sf D}({\mathcal{A}})$.
It remains to prove that 
${u_n}_{|_{\Omega}}\xrightarrow[n\to\infty]{}u$ strongly in 
$L^2(\Omega,{\mathds{R}}^d)$.
The proof is similar to the proof of Theorem~\ref{thm:increasingOmega}, 
comparing $\displaystyle{\liminf_{n\to\infty}\bigl\|{u_n}_{|_\Omega}\bigr\|}$
and $\displaystyle{\limsup_{n\to\infty}\bigl\|{u_n}_{|_\Omega}\bigr\|}$.
By weak convergence of $\bigl({u_n}_{|_{\Omega}}\bigr)_{n\in{\mathds{N}}}$ 
to $u$ in $L^2(\Omega,{\mathds{R}}^d)$, the inequality 
$\displaystyle{\liminf_{n\to\infty}\bigl\|{u_n}_{|_{\Omega}}\bigr\|_2\ge \|u\|_2}$
holds.
The proof of 
$\displaystyle{\limsup_{n\to\infty}\bigl\|{u_n}_{|_{\Omega}}\bigr\|_2\le \|u\|_2}$,
uses \eqref{eq:u_nV2} with $v=u_n$ and the fact that 
$\bigl(\tilde{u}_n\bigr)_{n\in{\mathds{N}}}$ converges weakly to $U$ in
$L^2({\mathds{R}}^d,{\mathds{R}}^d)$, so that
\begin{align*}
\limsup_{n\to\infty}\|{u_n}_{|_{\Omega}}\bigr\|_2^2
&\le \limsup_{n\to\infty}\|\tilde{u}_n\|_2^2
=\lim_{n\to\infty}\int_{{\mathds{R}}^d}\tilde{u}_n\cdot\tilde{f}
 -\liminf_{n\to\infty}\bigl\|\nabla \tilde{u}_n\bigr\|_2^2\\
&\le
\int_{{\mathds{R}}^d}U\cdot \tilde{f}-\|\nabla U\|_2^2
=\langle u,f\rangle-\|\nabla u\|_2^2=\|u\|_2^2
\end{align*}
by \eqref{eq:limU} with $v=u$.
\end{proof} 

\subsection{Comments}

The reader may want to compare Theorem~\ref{thm:decreasingOmega} 
and Theorem~\ref{thm:increasingOmega} with Proposition~\ref{prop:approxDelta}
and ask whether one can approximate the pseudo-Dirichlet Stokes operator in $\Omega$ 
with weak-Dirichlet Stokes operators in $\Omega_n$ where 
$\Omega_n\downarrow\Omega$ and the weak-Dirichlet Stokes operator in $\Omega$
with pseudo-Dirichlet Stokes operators in $\Omega_n$ where 
$\Omega_n\uparrow\Omega$. This is obviously true if the approximation domains
$\Omega_n$ are smooth and bounded since in this case, the weak-Dirichlet Stokes 
operator and the pseudo-Dirichlet Stokes operator coincide. In the case of increasing or
decreasing sequences of arbitrary domains, the strategy followed by \cite{AD08} 
(comparison of resolvents with respect to the inclusion of domains as in 
Remark~\ref{rem:comparison}) doesn't work: the Stokes problem is purely
vector-valued and the spaces involved are not Banach lattices.

{\small
\bibliographystyle{amsplain}
\addcontentsline{toc}{section}{References}

}
\end{document}